\newtheorem{theorem}{Theorem}[section] 
\newtheorem{proposition}[theorem]{Proposition}
\newcommand{\QQ}{\mathbf{Q}}
\newcommand{\FF}{\mathbf{F}}
\newcommand{\ZZ}{\mathbf{Z}}
\newcommand{\PP}{\mathbf{P}}
\newcommand{\MM}{\mathsf{M}}
\newcommand{\pp}{\mathfrak{p}}
\newcommand{\norm}[1]{\Vert #1 \Vert}
\newcommand{\Jac}{\mathrm{Jac}}
\title[Computing $L$-series of certain genus three curves]
 {Computing $L$-series of geometrically hyperelliptic curves \\ of genus three} 
\author[D. Harvey, M. Massierer, and A.V. Sutherland]{David Harvey, Maike Massierer, and Andrew V. Sutherland}
\begin{document}
\maketitle

\begin{abstract}
Let $C/\QQ$ be a curve of genus three, given as a double cover of a plane conic.
Such a curve is hyperelliptic over the algebraic closure of $\QQ$, but may not have a hyperelliptic model of the usual form over $\QQ$.
We describe an algorithm that computes the local zeta functions of $C$ at all odd primes of good reduction up to a prescribed bound $N$.
The algorithm relies on an adaptation of the ``accumulating remainder tree'' to matrices with entries in a quadratic field.
We report on an implementation, and compare its performance to previous algorithms for the ordinary hyperelliptic case.
\end{abstract}

\section{Introduction} 
\label{sec:intro}

Let $C/\QQ$ be a curve of genus three.
For an odd prime $p$ of good reduction for $C$, let $C_p$ denote the reduction of $C$ modulo $p$.
The zeta function of $C_p$ is defined by
\begin{equation}
\label{eq:zeta}
 Z_p(T) \coloneqq \exp\left(\sum_{k=1}^\infty \frac{\#C_p(\FF_{p^k})}k T^k \right) = \frac{L_p(T)}{(1-T)(1-pT)}.
\end{equation}
By the Weil Conjectures for curves, the numerator is of the form
 \[ L_p(T) = 1 + a_1 T + a_2 T^2 + a_3 T^3 + p a_2 T^4 + p^2 a_1 T^5 + p^3 T^6 \in \ZZ[T], \]
and has reciprocal roots of complex absolute value $p^{1/2}$.
In this paper we are interested in algorithms for computing $Z_p(T)$, for all good primes $p$ up to a prescribed bound $N$.

A closely related problem is to compute the first $N$ terms of the $L$-series associated to~$C$.
This is defined by formally expanding the Euler product
 \[ L(C,s) = \prod_p L_p(p^{-s})^{-1} = \sum_{n \geq 1} c_n n^{-s}. \]
To compute $c_n$ for all $n < N$, one must compute $a_1$ for all $p < N$, but one needs $a_2$ only for $p < N^{1/2}$ and $a_3$ only for $p < N^{1/3}$.
Note that for the primes of bad reduction, $L_p(T)$ is not necessarily given by \eqref{eq:zeta}, and must be computed by other means;
this problem is not addressed in this paper.


Curves of genus three over $\QQ$ come in two flavors, depending on the behavior of the canonical embedding $\phi : C \to \PP^2$.
The first possibility is that $\phi$ is a two-to-one cover of a plane conic~$Q$, in which case $C$ is geometrically hyperelliptic.
Otherwise, $\phi$ is an isomorphism from $C$ to a smooth plane quartic defined over $\QQ$, and we are in the nonhyperelliptic case.

In the hyperelliptic case, if $Q$ possesses a $\QQ$-rational point, then $Q$ is isomorphic to $\PP^1$ over~$\QQ$, and this yields a model for $C$ of the form $y^2 = h(x)$, with $h \in \ZZ[x]$.
For such curves, the first author proposed an algorithm that computes $L_p(T)$ for all odd $p < N$ using a total of $N (\log  N)^{3+o(1)}$ bit operations \cite{Harvey:HyperellipticPolytime}.
The average time per prime is thus $(\log N)^{4+o(1)}$.
Although that algorithm has not been implemented in full generality, the first and third authors \cite{HS:HyperellipticHasseWitt,HS-hassewitt2} have developed a simpler and closely related algorithm for computing the Hasse--Witt matrices~$W_p$ of the reductions modulo primes $p < N$ of a fixed hyperelliptic curve $y^2 = h(x)$ in average time $(\log N)^{4+o(1)}$ per prime; in practice, for curves of genus two and three this yields enough information to quickly deduce the local zeta functions.
This implementation outperforms existing packages, based on older algorithms, by several orders of magnitude.
An analogous algorithm for the nonhyperelliptic genus three case is currently under development and will be presented in a forthcoming paper.

The main contribution of this paper is an analogue of the algorithm of \cite{HS-hassewitt2} for the geometrically hyperelliptic case, without the assumption that $Q(\QQ) \neq \emptyset$.
The new algorithm takes as input an integer $N$ and homogeneous polynomials $f, g \in \ZZ[X,Y,Z]$, with $\deg g = 2$ and $\deg f = 4$, specifying the curve
\begin{equation}
\label{eq:eqn}
 g(X,Y,Z) = 0, \qquad w^2 = f(X,Y,Z).
\end{equation}
The equation $g = 0$ defines the conic $Q$, and $w^2 = f$ describes the two-to-one cover.
The output of the algorithm is the sequence of polynomials $L_p(T)$ for all good primes $p < N$.

The new algorithm is mainly intended for use in the case that $Q(\QQ) = \emptyset$.
However, the algorithm works perfectly well if $Q(\QQ) \neq \emptyset$; this may be useful, for example, if a $\QQ$-rational point on $Q$ exists but cannot be determined efficiently due to the difficulty of factoring the discriminant of $g$.
Of course, if a $\QQ$-rational point is known, then it may be profitable to switch to a standard hyperelliptic model $y^2 = h(x)$ and apply the algorithm of \cite{HS-hassewitt2} instead.

Our focus is on designing a \emph{practical} algorithm: we want to actually compute local zeta functions on real hardware for values of $N$ that are as large as is practical.
From a theoretical point of view, the existence of a complexity bound analogous to \cite{Harvey:HyperellipticPolytime} for curves of the type \eqref{eq:eqn} was essentially demonstrated by the first author in \cite{Har-arithzeta}.
The result may be stated as follows.
For any polynomial $F$ with integer coefficients we denote by $\norm{F}$ the maximum of the absolute values of the coefficients of $F$.
\begin{theorem}
There exists an explicit deterministic algorithm with the following properties.
The input consists of an integer $N \geq 2$, and polynomials $f$ and $g$ describing a genus three curve $C$ as in \eqref{eq:eqn}.
The output is the collection of $L_p(T)$ associated to $C$ for all good primes $p < N$.
The algorithm runs in
 $N \log^2 N \log^{1+o(1)} (N \norm f \norm g)$
bit operations.
\end{theorem}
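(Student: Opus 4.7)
The plan is to combine the general framework of \cite{Har-arithzeta} with a quadratic-field adaptation of the accumulating remainder tree used in \cite{HS-hassewitt2}. The broad strategy is: first pass to a quadratic extension over which $C$ acquires a standard hyperelliptic model; then apply the argument of \cite{Har-arithzeta} with the remainder tree operating over the ring of integers of that extension.

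To set things up, I would select a quadratic field $K = \QQ(\sqrt{d})$ over which the conic $Q$ has a rational point, with $d$ extracted from the discriminant of $g$; in particular $|d|$ is polynomially bounded in $\|g\|$. Parametrizing $Q_K \cong \PP^1_K$ and pulling $f$ back along this parametrization yields a model $y^2 = h(x)$ for $C_K$, with $h \in \mathcal{O}_K[x]$ of degree at most $8$ and coefficients of bit size $O(\log(\|f\|\|g\|))$. This brings $C_K$ into the standard hyperelliptic shape covered by the methods of \cite{Har-arithzeta}.

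For each good odd prime $p < N$, the framework of \cite{Har-arithzeta} expresses $L_p(T) \bmod p^k$, for a constant $k$ chosen so that the Weil bounds uniquely determine $a_1, a_2, a_3$, as the characteristic polynomial of a small Frobenius matrix obtained from a length-$O(p)$ product of fixed-size matrices with entries in $\mathcal{O}_K$; the entries are explicit polynomials in the running index and in the coefficients of $h$. I would organize the reduction modulo $p$ in $\mathcal{O}_K$ uniformly for all primes, and then post-process at each leaf according to the splitting type of $p$ in $K$: a split prime $p = \pp\opp$ descends to an $\FF_p$-matrix after further reduction through $\pp$; an inert prime remains as an $\FF_{p^2}$-matrix whose characteristic polynomial is automatically Galois-invariant and so has $\FF_p$-coefficients equal to $L_p(T) \bmod p$; the $O(1)$ ramified primes are handled individually in negligible time.

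The third step is to run the accumulating remainder tree of \cite{HS-hassewitt2} simultaneously over all primes $p < N$, representing each element of $\mathcal{O}_K$ as a pair of integers in a fixed $\ZZ$-basis. Because an $\mathcal{O}_K$ multiplication amounts to $O(1)$ integer multiplications, this costs only a constant factor more than the $\ZZ$-tree used in \cite{HS-hassewitt2}. Using fast integer arithmetic, the standard analysis of the remainder tree then gives the claimed bound $N \log^2 N \log^{1+o(1)}(N\|f\|\|g\|)$. I expect the main technical obstacle to lie in checking that the moduli at each level of the tree grow at the expected rate (roughly $\sum_{p<N} \log p \sim N$, even after the enlargement to $\mathcal{O}_K$), so that the cost at each level stays balanced; once this bookkeeping is verified, the complexity analysis reduces to a routine extension of the argument of \cite{Har-arithzeta}.
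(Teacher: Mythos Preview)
Your proposal has a genuine gap at the inert primes. When $p$ is inert in $K$, the reduction of your hyperelliptic model $y^2=h(x)$ lives over $\FF_{p^2}$, not over $\FF_p$; the matrix product you assemble over $\mathcal{O}_K$ therefore encodes the action of the $p^2$-power Frobenius on $C'_\pp$, not the $p$-power Frobenius on $C_p$. Concretely (see Section~\ref{sec:lpolys}), what you recover from this data is $L'_\pp(T)$, which satisfies $L_p(T)L_p(-T)=L'_\pp(T^2)$; you do not get $L_p(T)$ itself. Your sentence ``an inert prime remains as an $\FF_{p^2}$-matrix whose characteristic polynomial \ldots\ has $\FF_p$-coefficients equal to $L_p(T)\bmod p$'' is the step that fails. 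No amount of extra $p$-adic precision fixes this: even the exact integer polynomial $L_p(T)L_p(-T)$ does not determine $L_p(T)$ (already in genus one it determines $a_1$ only up to sign). Indeed, the paper's \emph{practical} algorithm, which does follow the quadratic-extension route you describe, explicitly does \emph{not} achieve the complexity bound of the theorem for exactly this reason; it resorts to a $p^{1/4+o(1)}$ Jacobian search at each inert prime (Section~\ref{sec:lifting}).

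The paper's proof of the theorem takes a different and much shorter route: it does not pass to a quadratic extension or a hyperelliptic model at all. It simply invokes \cite[Theorem~1.1]{Har-arithzeta}, which applies to an \emph{arbitrary} variety over $\ZZ$, directly to the model \eqref{eq:eqn} of $C$ over~$\QQ$. The dependence on $\norm f$ and $\norm g$ is then obtained from \cite[Theorem~1.4]{Har-arithzeta} together with the inclusion--exclusion trick of \cite[\S3]{LW-counting}. The price is an enormous implied constant, which is why the rest of the paper develops a separate, practical algorithm; but for the purposes of this existence statement, the general machinery suffices and sidesteps the inert-prime obstruction entirely.
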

We omit the details of the proof.
Ignoring the dependence on $\norm f \norm g$, the complexity bound is a special case of \cite[Theorem~1.1]{Har-arithzeta}, which applies to \emph{any} fixed variety over $\ZZ$.
To get the right dependence on $\norm f \norm g$, one may invoke \cite[Theorem~1.4]{Har-arithzeta} and apply the ``inclusion-exclusion trick'' of \cite[\S3]{LW-counting} (see the proof of \cite[Theorem~1.1]{Har-arithzeta} for a similar argument).
The difficulty with the algorithm just sketched is that the implied big-$O$ constant is enormous, essentially because the algorithms of \cite{Har-arithzeta} are designed for maximum possible generality.
To obtain a practical algorithm we must exploit the geometry of the situation at hand.

Our strategy is motivated by the following observation.
If we only want to compute $L_p(T)$ for a \emph{single} prime $p$, we may start by finding some $\FF_p$-rational point on the conic (such a point exists for all odd $p$).
This leads to a rational parametrization for the conic over $\FF_p$ and hence a model for $C_p$ of the form $y^2 = h(x)$ over $\FF_p$.
We may then apply any of the known point-counting algorithms for hyperelliptic curves over finite fields.
To mount a global attack along these lines, we must somehow choose these $\FF_p$-rational points ``coherently'' as $p$ varies.
This cannot be done over $\QQ$, because we are expressly avoiding any assumptions about $\QQ$-rational points on the conic.
On the other hand, it is easy to construct a quadratic extension $K = \QQ(\sqrt D)$ for which~$Q$ has $K$-rational points.
We may then parametrize $Q$ over $K$ to obtain a model $y^2 = h(x)$ of a hyperelliptic curve $C'/K$ that is isomorphic to the base change of $C$ to $K$, where $h\in \mathcal O_K[x]$.
Of course the curve $C'$ is not isomorphic to the original curve over $\QQ$ (it is not even defined over $\QQ$), but it nevertheless retains much arithmetic information about the original curve.

This is exactly the approach we take in this paper.
We start in Section \ref{sec:model} by explaining how to construct an appropriate field $K$ and a model $y^2 = h(x)$ for $C'$ over $K$.
In Section~\ref{sec:recurrences}, we set up recurrences for computing coefficients of powers of $h(x)$, analogous to \cite{HS-hassewitt2}, and in Section \ref{sec:tree} we show how to solve these recurrences efficiently by means of an ``accumulating remainder tree'' for matrices defined over a quadratic field.
Section \ref{sec:hw} applies these techniques to the problem of computing the Hasse--Witt matrices associated to $C'$, which in turn leads in Section \ref{sec:lpolys} to information about $L_p(T) \pmod p$.
Finally, to pin down $L_p(T) \in \ZZ[T]$ we perform a baby-step/giant-step search in the Jacobian of the curve; this is discussed in Section~\ref{sec:lifting}.
Section \ref{sec:summary} presents a complete statement of the algorithm, and the last section reports on an implementation and gives some performance data.

We will not give a formal complexity analysis of the algorithm; instead, we will discuss complexity issues as they arise, with an eye towards practical computations.
From an asymptotic perspective, our algorithm to compute $L_p(T) \in \ZZ[T]$ does \emph{not} run in average polynomial time, because the lifting step (see Section \ref{sec:lifting}) uses $p^{1/4+o(1)}$ bit operations per prime.
Nevertheless, as demonstrated by the timings in Section \ref{sec:performance}, the cost of the lifting step is negligible over the range of our experiments, and, by extrapolation, over the range of all currently feasible computations.
Moreover, the lifting step is trivially parallelizable (the rest of the algorithm is not), so this is unlikely to ever be a problem in practice.

There are two main applications of this new class of ``average polynomial time'' algorithms.
The first is the investigation of higher-genus variants of the Sato--Tate conjecture.
The original Sato--Tate conjecture proposed that for a fixed elliptic curve over $\QQ$, the distribution of the polynomials $L_p(T)$ (suitably normalized) obeys a particular statistical law when sampled over increasing values of $p$.
This is now a theorem thanks to work of Richard Taylor and collaborators \cite{CHT-automorphy,HST-automorphy,Tay-automorphy}, but analogues for curves of higher genus remain open.
The last few years have seen significant progress on understanding the details of the genus two case \cite{FKRS:SatoTate,Gonz:Modular,Joh:SatoTate,KatzSarnak:RandomMatrices,KS:SatoTate}, and attention is now shifting to genus three \cite{FS-genus3families,LS-picard}.
Briefly, the role of these algorithms is to assist in identifying potential candidate curves possessing certain Sato--Tate groups, by computing corresponding Sato--Tate statistics (moments of the sequence of normalized $L$-polynomials) for each of a large set of candidates.
The algorithm described in the present paper will be used to investigate the possibility that certain Sato--Tate distributions in genus~3 are encountered only for curves of the form~\eqref{eq:eqn}.

The second application is computing zeros and special values of $L$-functions to high precision; this played an important role in the recent addition of genus 2 curves to the $L$-functions and Modular Forms Database (LMFDB) \cite{lmfdb}, as described in \cite{BSSVY:Genus2DB} (as noted above, this application also requires the Euler factors at primes of bad reduction).

\smallskip
{\it Notation.}
We denote by $\MM(s)$ the number of bit operations required to multiply $s$-bit integers.
We may take $\MM(s) = s (\log s)^{1+o(1)}$ \cite{Furer2009,HLvdH-zmult,SS:IntegerMultiplication}.
As in \cite{HS-hassewitt2}, we assume that $\MM(s)/(s \log s)$ is increasing, and that the space complexity of $s$-bit integer multiplication is $O(s)$.

\section{Constructing a suitable quadratic field and hyperelliptic model}
\label{sec:model}

Let $C$ be a genus three curve over $\QQ$ as in \eqref{eq:eqn}.
The goal of this section is to construct an integer $D$, not a square and not divisible by $4$, and a squarefree polynomial $h \in O_K[x]$, where $O_K$ is the ring of integers of $K = \QQ(\sqrt D)$, such that $y^2 = h(x)$ is a model for $C' = C \times_\QQ K$ (the base change of $C$ to $K$).
Moreover, we require that $\deg h = 8$ and that $h(0) \neq 0$.

We assume that elements of $O_K = \ZZ[\alpha]$ are represented by pairs of integers corresponding to the coefficients of $1$ and $\alpha$, where $\alpha = \sqrt D$ if $D \equiv 2, 3 \pmod 4$, or $\alpha = \frac12(1 + \sqrt D)$ if $D \equiv 1 \pmod 4$.
In our applications we take $D$ to be squarefree, but this is not strictly necessary.

Choose any line $L$ in $\PP^2$ defined over $\QQ$, say $X = 0$.
The points of intersection of $L$ and $Q$ are defined over an extension $K = \QQ(\sqrt D)$ for some $D \in \ZZ$.
Note that $D$ is obtained as the discriminant (possibly adjusted by some square factor) of a quadratic equation obtained by solving $g = 0$ simultaneously with the equation of~$L$.
Let $P_0 \in Q(K)$ be one of the intersection points (of which there are at most two).
Now take a second line $L'$ in $\PP^2$, also defined over~$\QQ$, which does not contain $P_0$.
By projection from $P_0$, we obtain a $K$-rational parametrization of $Q(K)$ by the points of $L'(K)$.
Taking $x$ to be a coordinate for some affine piece of $L'$, we may write the parametrization as $(\psi_1(x), \psi_2(x), \psi_3(x)) \in \PP^2$, where the $\psi_i \in O_K[x]$ are polynomials of degree at most two.
Our preliminary model for $C'$ is then $y^2 = h(x)$, where $h(x) = f(\psi_1(x), \psi_2(x), \psi_3(x))$.

If $D$ is a square, then $K = \QQ$ and we have actually found a $\QQ$-rational point on $Q$.
In this case we could now simply apply the algorithm of \cite{HS-hassewitt2} to the equation $y^2 = h(x)$.
For the remainder of the paper, we assume that $D$ is not a square, so that $K/\QQ$ is a quadratic extension (although in fact the algorithm still works, \emph{mutatis mutandis}, for square $D$).

Clearly $\deg h \leq 8$.
Note that $C'$ is isomorphic to $C$ over $K$ so it must have genus three; hence $\deg h \geq 7$ and $h$ is squarefree.
It remains to enforce the conditions that $\deg h = 8$ and that $h(0) \neq 0$.
If $h(0) = 0$ we may replace $h(x)$ by $h(x-c)$ where $c$ is a small integer with $h(c) \neq 0$.
If $\deg h = 7$ we can replace $h(x)$ by $x^8 h(1/x)$, and translate again.
These transformations all correspond to birational maps.
In this way we obtain a model for $C'$ with $\deg h = 8$ and $h(0) \neq 0$.

Note that the conditions $\deg h = 8$ and $h(0) \neq 0$ are imposed only to simplify the presentation later.
From a complexity point of view it is actually \emph{better} to have $\deg h = 7$, or $h(0) = 0$, or both.
These occur when the curve has Weierstrass points defined over $K$, and in these cases we can work with smaller recurrence matrices in Section \ref{sec:recurrences}; see \cite[Section~6.2]{HS-hassewitt2} for details.
Our current implementation always assumes that $\deg h = 8$ and that $h(0) \neq 0$.

The running time of the main algorithm is quite sensitive to the bit size of the coefficients of $h$, and to some extent the bit size of $|D|$.
In the procedure described above, we have made no attempt to minimize these quantities.
If this became a bottleneck, one could try changing variables to obtain a conic with smaller coefficients \cite{CR-conics}, and one can also attempt to reparametrize $L'$ to minimize the coefficients of the resulting $h(x)$ \cite{SC-reduction}.
We do not know if these methods would lead to optimal running times; this seems to be a difficult problem, because of the dependence of the hyperelliptic model on the choice of $D$.
We suspect that to obtain a truly optimal model, one would need to optimize $D$ and $h(x)$ simultaneously.
In any case, if we restrict our attention to certain very simple conics, then we can often write down parametrizations for which the bit sizes remain under control;
see Section \ref{sec:performance} for an example.
We expect that this will be sufficient for the application to the Sato--Tate conjecture.

\section{Recurrences for the hyperelliptic model}
\label{sec:recurrences}

Let $y^2 = h(x)$ be a model for $C'$ over $K = \QQ(\sqrt D)$ as in Section~\ref{sec:model}.
For each odd prime~$p$ we define a row vector $U_p \in (O_K/p)^3$ by
 \[ (U_p)_j \coloneqq h^{(p-1)/2}_{p-j} \bmod p, \qquad j \in \{1, 2, 3\}. \]
Here $h^{(p-1)/2}_{p-j}$ denotes the coefficient of $x^{p-j}$ in $h^{(p-1)/2}$.
Note that $O_K/p$ is not necessarily a field, because $p$ may split in $K$.

These vectors are closely related to the Hasse--Witt matrices for $C$, which are in turn related to the local zeta functions.
The exact relationship is discussed in Sections \ref{sec:hw} and \ref{sec:lpolys}.
In this section we concentrate on the following problem: given a bound $N$, compute $U_p$ for all odd $p < N$ (except a small number of ``exceptional'' primes as indicated below).

Write $h(x) = h_0 + h_1 x + \cdots + h_8 x^8$ where $h_i \in O_K$, $h_0 \neq 0$.
For each integer $k \geq 1$, define an $8 \times 8$ matrix $M_k$ with entries in $O_K$ by
 \[
M_k \coloneqq \begin{bmatrix}
      0 &  \cdots  &   0     & (8-2k)h_8 \\
2 k h_0 &  \cdots  &   0     & (7-2k)h_7 \\
 \vdots &  \ddots  & \vdots  & \vdots    \\
      0 &  \cdots  & 2 k h_0 & (1-2k)h_1
\end{bmatrix}.
\]
Also define the vector $V_0 = [0,0,0,0,0,0,0,1] \in (O_K)^8$.

\begin{proposition}
\label{prop:Up}
Let $p$ be an odd prime with $(h_0, p) = 1$.
Then $U_p$ is equal to the vector consisting of the last three entries (in reversed order) of the vector
 \[ \frac{-1}{h_0^{(p-1)/2}} V_0 M_1 \cdots M_{p-1} \pmod p. \]
\end{proposition}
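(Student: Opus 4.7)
The plan is to derive a length-eight linear recurrence for the coefficients of $g := h^{(p-1)/2}$ modulo $p$, then observe that this recurrence is precisely what $W_n \mapsto W_n M_n$ implements for a suitable state vector $W_n$, and finally compose $p-1$ of these transitions and simplify the scalar prefactor via Wilson's theorem.

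First I would observe that $g' = \tfrac{p-1}{2} h^{(p-3)/2} h' \equiv -\tfrac12 h^{-1} h' g \pmod p$, which rearranges to the ODE $2hg' + h'g \equiv 0 \pmod p$. Extracting the coefficient of $x^{n-1}$ from this identity (using $h_i = 0$ for $i > 8$ and $g_j = 0$ for $j < 0$) and isolating the $g_n$ term yields, for every $n \geq 1$,
\[ 2 n h_0\, g_n \equiv \sum_{m=1}^{8} (m - 2n)\, h_m\, g_{n-m} \pmod p. \]
Defining $W_n := [g_{n-8}, g_{n-7}, \ldots, g_{n-1}]$, a direct check shows that the subdiagonal of $M_n$ carries out the shift while the last column of $M_n$ encodes exactly the right-hand side above; hence $W_n M_n \equiv 2 n h_0\, W_{n+1} \pmod p$.

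Since $g_0 = h_0^{(p-1)/2}$ and $g_j = 0$ for $j < 0$, we have $W_1 = h_0^{(p-1)/2} V_0$. Iterating the recurrence $p-1$ times (each factor $2nh_0$ is a unit in $O_K/p$ because $(h_0,p)=1$ and $1 \leq n \leq p-1$) gives
\[ W_p \equiv \frac{h_0^{(p-1)/2}}{2^{p-1}\,(p-1)!\, h_0^{p-1}}\, V_0 M_1 \cdots M_{p-1} \pmod p. \]
Applying Fermat's little theorem ($2^{p-1} \equiv 1$) and Wilson's theorem ($(p-1)! \equiv -1$) — both valid in $O_K/p$ via the injection $\ZZ/p \hookrightarrow O_K/p$ — simplifies the scalar prefactor to $-1/h_0^{(p-1)/2}$. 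Reading off the last three entries of $W_p = [g_{p-8}, \ldots, g_{p-1}]$ in reverse order yields $[g_{p-1}, g_{p-2}, g_{p-3}] = U_p$, which is the desired conclusion.

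The main bookkeeping hurdle will be step two: extracting the recurrence cleanly so that the edge cases $n \leq 8$ (where a term like $n h_n g_0$ must be absorbed into the sum by extending the range of $m$) fit the same uniform formula as the generic $n > 8$ case, and then lining up the indexing of the last column of $M_n$ against the recurrence (the entry in position $(i,8)$ contributes $(9-i-2n)h_{9-i} g_{n-(9-i)}$ to $(W_n M_n)_8$, which upon substituting $m = 9-i$ matches the recurrence verbatim). Everything else is either standard calculus of formal power series or elementary number theory.
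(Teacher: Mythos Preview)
Your proof is correct and follows essentially the same route as the paper's: derive the first-order recurrence for the coefficients of $h^{(p-1)/2}$ (the paper defers this to \cite[Section~2]{HS-hassewitt2}, whereas you spell out the ODE $2hg'+h'g\equiv 0$ explicitly), package it as a matrix recurrence, iterate, and simplify via Wilson/Fermat. Your state vector $W_n$ is the paper's $v_{n-1}$, so the indexing differs by one but the argument is identical. One stylistic quibble: writing $g'\equiv -\tfrac12 h^{-1}h'g$ invokes $h^{-1}$ unnecessarily; it is cleaner to note $2hg' = (p-1)h'g$ in $O_K[x]$ and reduce, which gives $2hg'+h'g\equiv 0\pmod p$ directly without inverting $h$.
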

\begin{proof}
For $1 \leq k \leq p-1$ let
 $ v_k \coloneqq [h^{(p-1)/2}_{k-7}, \ldots, h^{(p-1)/2}_k] \in (O_K/p)^8$.
Using exactly the same argument as in \cite[Section 2]{HS-hassewitt2}, one may show that $v_k$ satisfies the recurrence
\begin{equation}
\label{eq:vk}
 v_k = \frac{1}{2 k h_0} v_{k-1} M_k \pmod p.
\end{equation}
Iterating this recurrence yields
 \[ v_{p-1} = \frac{1}{(p-1)! (2 h_0)^{p-1}} v_0 M_1 \cdots M_{p-1} \pmod p. \]
Since $2^{p-1} (p-1)! = -1 \pmod p$ and $v_0 = [0, \ldots, 0, (h_0)^{(p-1)/2}] \pmod p$, we have
 \[ v_{p-1} = \frac{-1}{h_0^{(p-1)/2}} V_0 M_1 \cdots M_{p-1} \pmod p. \]
The last three entries of $v_{p-1}$ are precisely the entries of $U_p$.
\end{proof}

According to the proposition, the problem of computing $U_p$ for all odd primes $p < N$, except those for which $(h_0, p) \neq 1$, reduces to the problem of computing $V_0 M_1 \cdots M_{p-1} \pmod p$ for all $p < N$.
In Section~\ref{sec:tree} we will explain how to efficiently compute products of this type;
this step constitutes the bulk of the running time of the main algorithm.

\section{The accumulating remainder tree over a quadratic field}
\label{sec:tree}

The accumulating remainder tree is a computational technique that lies at the heart of all of the recent average polynomial time point-counting algorithms.
The basic scalar version was introduced in \cite{Harvey:WilsonPrimes}, and it was generalized to integer matrices in \cite{Harvey:HyperellipticPolytime}.
In this section we present a variant that works over the ring of integers of a quadratic field $K$.

We will use the same notation as in \cite[Section~3]{HS-hassewitt2}.
Let $b \geq 2$ and $r \geq 1$.
Let $m_1, \ldots, m_{b-1}$ be a sequence of positive integers.
Let $A_0, \ldots, A_{b-2}$ be a sequence of $r \times r$ matrices with entries in $O_K$, and let $V$ be an $r$-dimensional row vector with entries in $O_K$.
The aim is to compute the sequence of reduced row vectors $C_1, \ldots, C_{b-1}$ defined by
\begin{equation}
\label{eq:Cn}
 C_n \coloneqq V A_0 \cdots A_{n-1} \bmod m_n.
\end{equation}
So far, this setup is identical to \cite{HS-hassewitt2}, except that in that paper $A_j$ and $C_j$ had entries in $\ZZ$ rather than $O_K$.

To apply this to the situation in Section \ref{sec:recurrences}, we set $r = 8$, $b = \lfloor N/2 \rfloor$, $A_j = M_{2j+1} M_{2j+2}$, $V = V_0$, and $m_n = 2n+1$ if $2n+1$ is prime, or $1$ if not.
Then for any odd $p < N$ we have $C_{(p-1)/2} = V_0 M_1 \cdots M_{p-1} \pmod p$, from which we can read off the entries of $U_p$ by Proposition~\ref{prop:Up} (provided that $(h_0, p) = 1$).

The naive algorithm for computing $C_n$, which separately computes each product $V A_0 \cdots A_{n-1}$ modulo $m_n$, leads to a running time bound that is quasi-quadratic in $b$.
The accumulating remainder tree improves this to a quasi-linear bound.
Pseudocode is given in Algorithm \textsc{QuadraticRemainderTree} below.
For simplicity we assume that $b = 2^\ell$ is a power of two, although this is not strictly necessary.
The algorithm actually computes various intermediate quantities $m_{i,j}$, $A_{i,j}$ and $C_{i,j}$, where $0 \leq i \leq \ell$ and $0 \leq j < 2^i$ (see \cite{HS-hassewitt2} for precise definitions);
the output is obtained as $C_j = C_{\ell,j}$.
For convenience we set $m_0 = 1$ and let $A_{b-1}$ be the identity matrix.

\bigskip

\noindent
\textbf{Algorithm} \textsc{QuadraticRemainderTree}
\vspace{2pt}

\noindent
Given $V, A_0,\ldots,A_{b-1}$, $m_0,\ldots,m_{b-1}$, with $b=2^\ell$, compute $m_{i,j}, A_{i,j}$, $C_{i,j}$:
\smallskip

\begin{enumerate}[1.]
\setlength{\itemsep}{2pt}
\item Set $m_{\ell,j}=m_j$ and $A_{\ell,j}=A_j$, for $0\le j < b$.
\item For $i$ from $\ell-1$ down to 0:\\
\phantom{For} For $0\le j < 2^i$, set $m_{i,j}=m_{i+1,2j}m_{i+1,2j+1}$ and $A_{i,j}=A_{i+1,2j}A_{i+1,2j+1}$.
\item Set $C_{0,0}=V \bmod m_{0,0}$ and then for $i$ from 1 to $\ell$:\\
\phantom{Set }For $0 \leq j < 2^i$ set $C_{i,j} =
\begin{cases}
C_{i-1,\lfloor j/2\rfloor}\bmod m_{i,j}\qquad&\text{if $j$ is even,}\\ 
C_{i-1,\lfloor j/2\rfloor}A_{i,j-1}\bmod m_{i,j}&\text{if $j$ is odd.}\\ 
\end{cases}$
\end{enumerate}
\bigskip

In fact, this pseudocode is copied verbatim from algorithm \textsc{RemainderTree} in \cite{HS-hassewitt2}.
The only difference between \textsc{RemainderTree} and \textsc{QuadraticRemainderTree} is the underlying data type; in \textsc{RemainderTree} the objects $A_{i,j}$ and $C_{i,j}$ are defined over $\ZZ$, whereas in \textsc{QuadraticRemainderTree} they are defined over $O_K$.
In all other respects, including the proof of correctness, the algorithms are identical.

The following theorem summarizes the performance characteristics of \textsc{QuadraticRemainderTree}.
The bit size of an element of $O_K = \ZZ[\alpha]$ is defined to be the maximum of the bit sizes of the coefficients of $1$ and $\alpha$.
\begin{theorem}
\label{thm:treebound}
Let $B$ be an upper bound for the bit size of $\prod_{j=0}^{b-1} m_j$, let $B'$ be an upper bound for the bit size of any entry of $V$, and let $H$ be an upper bound for the bit size of any $m_0, \ldots, m_{b-1}$ and any entry of $A_0, \ldots, A_{b-1}$.
Assume that $\log r = O(H)$ and that $r = O(\log b)$.
The running time of the \textsc{QuadraticRemainderTree} algorithm is
 \[ O(r^2 \MM(B + bH) \log b + r \MM(B')), \]
and its space complexity is $O(r^2 (B + bH) \log b + rB')$.
\end{theorem}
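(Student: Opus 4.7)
The plan is to reduce Theorem~\ref{thm:treebound} to its integer-valued analogue proved in \cite[Section~3]{HS-hassewitt2}, by observing that working in $O_K$ instead of $\ZZ$ costs only a constant factor. Every element of $O_K = \ZZ[\alpha]$ is stored as a pair of integers $(c_0, c_1)$ with $c = c_0 + c_1 \alpha$; addition, subtraction, and reduction modulo a rational integer are componentwise. Multiplication in $O_K$ expands to $(a_0 + a_1\alpha)(b_0 + b_1\alpha) = (a_0 b_0 + c_0 a_1 b_1) + (a_0 b_1 + a_1 b_0 + c_1 a_1 b_1)\alpha$, where $\alpha^2 = c_0 + c_1 \alpha$ with $c_0,c_1 \in \ZZ$ fixed; this requires a bounded number of integer multiplications and additions on operands of comparable size, inflating bit size by only $O(1)$. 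Hence each arithmetic operation on $O_K$-elements of bit size $s$ costs $O(\MM(s))$, and the bit-size bookkeeping for $O_K$-products reproduces that for $\ZZ$.

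With this in hand, I would carry out the standard inductive bit-size analysis along the tree. The modulus $m_{i,j}$ at level $i$ is a rational integer with $\prod_j m_{i,j} \mid \prod_j m_j$, so its bit sizes at level $i$ sum to at most $B + O(2^i H)$ (accounting for the propagation of carries from the unpadded $m_0, \ldots, m_{b-1}$). The matrix $A_{i,j}$ is a product of $b/2^i$ input matrices whose $O_K$-entries have bit size at most $H$, so each entry has bit size at most $(b/2^i)H + O(\log r)$, with the $O(1)$ overhead per $O_K$-multiplication absorbed into the constant since $D$ (and hence $c_0, c_1$) is fixed. The $C_{i,j}$ are reduced modulo $m_{i,j}$, so their entries have bit size $O(\log_2 m_{i,j})$.

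The operation count then proceeds level by level, exactly as in the integer case. Each node at level $i$ performs a vector--matrix product followed by a reduction modulo an integer of bit size $O(B/2^i + H)$ in Step~3, or a matrix product followed by an integer product of moduli in Step~2; using the assumptions $\log r = O(H)$ and $r = O(\log b)$ together with the superlinearity bound $2^i \MM(s/2^i) \le \MM(s)$, the contributions telescope to $O(r^2 \MM(B + bH) \log b)$ overall. The initial step $C_{0,0} = V \bmod m_{0,0}$ contributes $O(r \MM(B'))$, obtained by applying integer reduction coordinatewise to the two $\ZZ$-components of each of the $r$ entries of $V$. The space bound follows from totalling the bit-storage of the $A_{i,j}$ and $m_{i,j}$ held simultaneously during the computation, which sums to $O(r^2(B+bH)\log b)$, plus $O(rB')$ for $V$.

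The main ``obstacle'' here is strictly bookkeeping: once the constant-factor reduction from $O_K$-arithmetic to $\ZZ$-arithmetic is in place, there is no substantive new content beyond rerunning the argument of \cite{HS-hassewitt2}. The only nontrivial check is that the multiplicative bit-size growth in $O_K$ matches that in $\ZZ$ up to an additive $O(1)$ per multiplication, which is guaranteed by the fixed structure constants; this is what permits the big-$O$ bounds from the integer case to be imported verbatim.
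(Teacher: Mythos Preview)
Your proposal is correct and follows essentially the same approach as the paper: reduce arithmetic in $O_K$ to a bounded number of integer operations of comparable size (using the fixed structure constants coming from $\alpha^2$), then invoke the integer analysis of \cite[Theorem~3.2]{HS-hassewitt2} verbatim. The paper phrases the reduction at the matrix level (writing $R = R_0 + R_1\alpha$ and expanding $RS$ into four integer matrix products) rather than at the scalar level, and simply cites \cite{HS-hassewitt2} for the tree bookkeeping you spell out, but the content is the same.
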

\begin{proof}
The statement is identical to Theorem 3.2 of \cite{HS-hassewitt2}.
The only difference in the analysis is that we must bound the cost of all operations over $O_K$ instead of over $\ZZ$.
We assume for this discussion that $D$ is \emph{fixed}; in our applications we arrange for $D$ to be small, say $-1$ or $2$.

The main operation to consider is computing the product of two $r \times r$ matrices, say $R$ and~$S$, with entries in $O_K$.
Write them as $R = R_0 + R_1 \alpha$ and $S = S_0 + S_1 \alpha$, where the $R_i$ and $S_i$ are integer matrices.
In the $D \not\equiv 1 \pmod 4$ case, we have $RS = (R_0 S_0 + D R_1 S_1) + (R_0 S_1 + R_0 S_1) \alpha$.
This clearly reduces to four matrix multiplications over $\ZZ$, plus several much cheaper operations (matrix additions, and scalar multiplication by $D$).
A similar formula holds for the $D \equiv 1 \pmod 4$ case, and similar remarks apply to the matrix-vector multiplications in step 3.

Overall, we clearly lose only a constant factor compared to the analysis in~\cite{HS-hassewitt2}.
\end{proof}

\begin{remark}
\label{rem:forest}
One can greatly improve the space consumption (and to a lesser extent, the running time) of the accumulating remainder tree algorithm by utilizing the \emph{remainder forest} technique introduced in \cite{HS:HyperellipticHasseWitt}; see also \cite[Theorem~3.3]{HS-hassewitt2}.
The idea is to split the work into $2^\kappa$ subtrees, where $\kappa \in [0,\ell]$ is a parameter.
This is important for practical computations, because \textsc{QuadraticRemainderTree} is extremely memory intensive.
\end{remark}

\subsection{Practical considerations}
\label{sec:practical}

In practice, the running time of the main algorithm is dominated by the matrix-matrix and matrix-vector multiplications over $\ZZ[\alpha]$, so it is important to optimize this step.

Let us first recall the discussion in \cite{HS-hassewitt2} for the case of matrices over $\ZZ$.
For multiplying $r \times r$ integer matrices, the classical matrix multiplication algorithm requires $r^3$ integer multiplications.
This is exactly what we do near the bottom of the tree, where the matrix entries are relatively small.

Further up the tree, when the matrix entries become sufficiently large, it becomes profitable to use FFT-based integer multiplication.
For example, the well-known GMP multiple-precision arithmetic library \cite{gmp-6.0} will automatically switch to a variant of the Sch\"onhage--Strassen algorithm for large enough multiplicands.
However, this is inefficient because each matrix entry will be transformed $r$ times.
This redundancy can be eliminated by means of the following alternative algorithm: (1) transform each of the $2r^2$ matrix entries, then (2) multiply the matrices of Fourier coefficients, and finally (3) perform an inverse transform on each of the $r^2$ entries of the target matrix.
This strategy reduces the number of transforms from $3r^3$ to $3r^2$.

Unfortunately, in our implementation we cannot carry out this plan using GMP, because GMP currently does not provide an interface to access the internals of its FFT representation.
Moreover, the Sch\"onhage--Strassen framework is not well suited to the matrix case, because the Fourier coefficients are relatively large.
Instead, we implemented our own FFT based on number-theoretic transforms modulo word-sized primes; see \cite[Section 5.1]{HS:HyperellipticHasseWitt}.

Asymptotically, for large matrix entries, we expect the running time to be dominated by the Fourier transforms, and so we expect a speedup of a factor of about $r$ compared to the classical algorithm.
The measured speedup is somewhat less than this, because of the contribution of step (2).
For example, taking $r = 8$ and matrices with entries of 500 million bits, we observe a speedup of around 6.4, rather than~8.

Turning now to $\ZZ[\alpha]$, the same principle applies.
Suppose that $D \not\equiv 1 \pmod 4$ and that $R = R_0 + R_1\alpha$ and $S = S_0 + S_1\alpha$, where the $R_i$ and $S_i$ are integer matrices.
We may write the product as
 $RS = (R_0 S_0 + (DR_1)S_1) + (R_0 S_1 + R_1 S_0) \alpha$.
We compute this as follows:
\begin{enumerate}
\item Transform the entries of $R_0$, $S_0$, $R_1$, $S_1$ and also $DR_1$.
There are $5r^2$ transforms here.
Denote these by $T(R_0), \ldots, T(DR_1)$.
\item Multiply the matrices of Fourier coefficients, to obtain $T(R_0) T(S_0)$, $T(DR_1) T(S_1)$, $T(R_0) T(S_1)$, and $T(R_1) T(S_0)$.
\item Add the matrices of Fourier coefficients, to obtain $T(R_0) T(S_0) + T(DR_1) T(S_1)$ and $T(R_0) T(S_1) + T(R_1) T(S_0)$.
\item Perform $2r^2$ inverse transforms to obtain the components of each entry of $RS$.
\end{enumerate}
A similar discussion applies to the $D \equiv 1 \pmod 4$ case.

Altogether we count $7r^2$ transforms, compared to $3r^2$ for the plain integer case.
We thus expect the ratio of the cost of multiplying two matrices over $\ZZ[\alpha]$ to the cost of multiplying two matrices over $\ZZ$ to be about $7/3 \approx 2.33$, assuming inputs of the same bit size.
The measured ratio is somewhat worse than this, mainly because of the non-negligible contribution of step~(2).
For example, with $r = 8$ and entries of 500 million bits, we observe a ratio of around 2.67.

One further optimization, which we did not pursue in our implementation, is to absorb the factor $D$ directly into the transforms themselves.
For example, if $D = -1$, the transform of $DR_1$ is just the negative of the transform of $R_1$, which we have already computed.
This would reduce the number of transforms from $7r^2$ to $6r^2$.
Unfortunately, this leads to technical complications for larger values of $|D|$, because the size of the Fourier coefficients needs to be increased to accommodate the extra factor of $D$.
In the context of our ``small prime'' FFTs, this optimization might be reasonable for very small $|D|$, but in the interests of maintaining generality and simplicity of our code, we did not implement it.

\section{Computing the Hasse--Witt matrices}
\label{sec:hw}

We now return to the hyperelliptic model $y^2 = h(x)$ for $C'$ over $K = \QQ(\sqrt D)$ that was constructed in Section \ref{sec:model}.

For each odd prime $p < N$ we select a prime ideal $\pp$ of $K$ above $p$ that we assume is unramified (we ignore the primes $p$ that ramify in $K$).
Now assume that $\pp$ does not divide the discriminant of $h(x)$, so that $C'$ has good reduction at $\pp$.
The \emph{Hasse--Witt matrix} of $C'$ at $\pp$ is the $3 \times 3$ matrix $W_\pp$ over $O_K/\pp$ with entries
 \[ (W_\pp)_{i,j} = h^{(p-1)/2}_{pi-j} \bmod \pp, \qquad i, j \in \{1, 2, 3\}. \]
There is a close relationship between $W_\pp$ and the local zeta function of the original curve $C$, which is discussed in Section \ref{sec:lpolys}.
In the remainder of this section we explain how to compute the~$W_\pp$.

Let $W^1_\pp$ denote the \emph{first row} of $W_\pp$.
By definition, $W^1_\pp$ is simply $U_p \pmod \pp$, where $U_p$ is the vector defined in Section \ref{sec:recurrences}.
We may therefore compute $W^1_\pp$ for all $p < N$ by using \textsc{QuadraticRemainderTree} (Section \ref{sec:tree}) to compute $U_p$ for all $p < N$, and then reduce each~$U_p$ modulo our chosen prime ideal~$\pp$ for each prime.

To obtain the remaining rows of $W_\pp$, the most obvious approach is to continue iterating the recurrence of Section~\ref{sec:recurrences} to reach $v_{2p-1}$ and $v_{3p-1}$ (in the notation of the proof of Proposition~\ref{prop:Up}).
This can be made to work, but there are technical difficulties: the factor of $k$ in the denominator of \eqref{eq:vk} leads to divisions by $p$.
Bostan, Gaudry and Schost deal with this by artificially introducing extra $p$-adic digits \cite{BGS-recurrences}.
We will use instead the following trick, which was suggested in \cite[Section~5]{HS-hassewitt2}.

For each integer $\beta$, let $W_\pp(\beta)$ denote the Hasse--Witt matrix of the \emph{translated} curve $y^2 = h(x+\beta)$, and let $W^1_\pp(\beta)$ denote its first row.
The relation between $W_\pp$ and $W_\pp(\beta)$ is given by
\begin{equation}
\label{eq:hw-translate}
  W_\pp(\beta) = T(\beta) W_\pp T(-\beta),
\end{equation}
where
 \[ T(\beta) = \begin{bmatrix} 1 & \beta & \beta^2 \\ 0 & 1 & 2\beta \\ 0 & 0 & 1 \end{bmatrix}. \]
For a proof, see \cite[Theorem~5.1]{HS-hassewitt2}.
(Note that in \cite{HS-hassewitt2} we work over $\FF_p$ whereas here we are possibly working over an extension, but this does not change the resulting formula, because $\beta$ is a rational integer.)

Now suppose that we have computed $W^1_\pp(\beta_i)$ for three integers $\beta_1, \beta_2, \beta_3$, and we wish to deduce $W_\pp$.
For each $i$, the equation $W_\pp(\beta_i) = T(\beta_i) W_\pp T(-\beta_i)$ yields a system of three linear equations in the nine unknown entries of $W_\pp$.
We therefore have nine equations in nine unknowns, and the same argument as in \cite[Section~5]{HS-hassewitt2} shows that this system has a unique solution, provided that $\beta_1$, $\beta_2$ and $\beta_3$ are distinct modulo $p$.

\section{Computing the $L$-polynomials modulo $p$}
\label{sec:lpolys}

At this stage, for each prime $p < N$ (except for various exceptional primes), we have computed $W_\pp$ for our chosen $\pp$ above $p$.
In this section we explain how this determines $L_p(T) \pmod p$ in the split case, and $L_p(T) L_p(-T) \pmod p$ in the inert case.

Consider the zeta function of $C'$ at $\pp$.
This is defined by
 \[ Z'_\pp(T) \coloneqq \exp\left(\sum_{k=1}^\infty \frac{N_k}k T^k \right) = \frac{L'_\pp(T)}{(1-T)(1-qT)}, \]
where $N_k$ is the number of points on $C'_\pp$ (the reduction of $C'$ modulo $\pp$) defined over the extension of $O_K/\pp$ of degree $k$.
As before, $L'_\pp(T) \in \ZZ[T]$ has degree six.

In the split case, we simply have
 $L'_\pp(T) = \det(I - T W_\pp) \pmod{\pp}$.
Thus $W_\pp$ determines $L'_\pp(T) \pmod p$.
Moreover, since $C'_\pp$ is isomorphic to $C_p$ over $O_K/\pp \cong \ZZ/p\ZZ$, they have the same zeta functions, so
 $L_p(T) = L'_\pp(T)$ (in $\ZZ[T]$).
Hence $W_\pp$ determines $L_p(T) \pmod p$.

In the inert case, we have
 $L'_\pp(T) = \det(I - T W_\pp W_\pp^{(p)}) \pmod{\pp}$,
where $W_\pp^{(p)}$ denotes the matrix obtained by applying the absolute Frobenius map to each entry of $W_\pp$, which raises each entry to the $p$-th power.
So again in this case $W_\pp$ determines $L_\pp(T) \pmod p$.
Unfortunately, because of the base change from $\QQ$ to $K$, we lose information when passing from $C$ to $C'$; in effect, we have computed the zeta function of $C_p$ over $\FF_{p^2}$.
All we can conclude is that
 $L_p(T) L_p(-T) = L'_\pp(T^2)$
(see \cite[Ch.~VIII, Lemma~5.12]{Lor-invitation}), so $W_\pp$ determines only $L_p(T) L_p(-T) \pmod p$.

\section{Lifting the $L$-polynomials}
\label{sec:lifting}

We now turn to the problem of determining $L_p(T)\in \ZZ[T]$, given as input either (1) $L_p(T) \pmod p$ (for $p$ split in $K$), or (2) $L_p(T) L_p(-T) \pmod p$ (for $p$ inert in $K$).
Our approach to this problem utilizes generic group algorithms operating in $\Jac(C_p)(\FF_p)$, the group of $\FF_p$-rational points on the Jacobian variety of the reduction of $C$ modulo $p$.
It is a finite abelian group of order $p^3+O(p^{5/2})$.

We first need a model for the curve that supports efficient arithmetic in $\Jac(C_p)(\FF_p)$.
We start with the reduction modulo $p$ of the model given in \eqref{eq:eqn}.
Although the conic $g=0$ has no $\QQ$-rational points, its reduction modulo $p$ does have $\FF_p$-rational points, and therefore admits a rational parametrization that can be used to construct a hyperelliptic model $y^2=h(x)$ with $h \in \FF_p[x]$, as in Section \ref{sec:model}.
The cost of constructing this model is negligible.
Now, if $C_p$ has a rational Weierstrass point, we move it to infinity and thus make $h(x)$ monic of degree~7; in this case fast explicit formulas for arithmetic in $\Jac(C_p)(\FF_p)$ are well known \cite[\S 14.6]{HECECC}.
If $C_p$ does not have a rational Weierstrass point, then provided $p\ge 37$ (which we assume), it has a rational non-Weierstrass point $P$; moving this point to infinity, we obtain a model with $h(x)$ monic of degree~8.
Fast explicit formulas for arithmetic in $\Jac(C_p)(\FF_p)$ for such models have recently been developed~\cite{Sut:Genus3Real}, using the balanced divisor approach of \cite{GHM:BalancedDivisors,Mor:Thesis}.

Case (1) is considered in \cite{KS:HyperellipticLSeries}, where it is noted that the problem of determining $L_p(T)\in \ZZ[T]$ given $L_p(T)\pmod p$ can be solved in $p^{1/4+o(1)}$ time (for a curve of genus $3$).
Let us briefly recall how this is done.

If $p\ge 149$, then $L_p(T)\pmod p$ uniquely determines the coefficient~$a_1$ of $L_p(T)$.
Indeed, from the Weil bounds we have $|a_i|\le\binom{6}{i}p^{i/2}$ for $i = 1, 2, 3$.
This inequality constrains $a_2$ to at most $2\binom{6}{2}=30$ values compatible with $a_2\pmod p$.
In fact, once~$a_1$ is known, there are at most 6 possibilities for $a_2$; this follows from \cite[Prop. 4]{KS:HyperellipticLSeries}.
For each of these 6 values of $a_2$, the pair $(a_1,a_2)$ determines a set of at most $40p^{1/2}$ possible values of $a_3$, corresponding to an arithmetic progression modulo $p$.
The pair $(a_1,a_2)$ also determines corresponding arithmetic progressions modulo $p$ in which the integers
\begin{align}\label{eq:jacorders}
\#\Jac(C_p)(\FF_p) = L_p(1)&=(p^3+1)+(p^2+1)a_1+(p+1)a_2+a_3,\\\notag
\#\Jac(\tilde{C}_p)(\FF_p) = L_p(-1)&=(p^3+1)-(p^2+1)a_1+(p+1)a_2-a_3
\end{align}
must lie; here $\tilde{C}_p$ denotes a (non-trivial) quadratic twist of $C_p$.

Now, given any $\alpha \in \Jac(C_p)(\FF_p)$ (or $\Jac(\tilde{C}_p)(\FF_p)$), we may compute its order $|\alpha|$ as follows.
First, apply a baby-steps giant-steps search to the appropriate arithmetic progression to obtain a multiple~$m$ of $|\alpha|$.
Then factor $m$ and use a polynomial-time fast order algorithm (see \cite[Ch.~7]{Sut:Thesis}) to compute $|\alpha|$.
The time to factor $m=O(p^3)$ is negligible compared to the cost of the baby-steps giant-steps search, both in theory \cite{LP:Factoring} and in practice.
Note that if our candidate value of~$a_2$ is incorrect, we may not find such an $m$, in which case we discard this value of~$a_2$ and proceed to the next of our (at most 6) candidates.
One of the candidates must work, hence we can determine the order of $\alpha$ in $p^{1/4+o(1)}$ time.
This applies more generally to any situation where we have $O(1)$ possible pairs $(a_1,a_2)$ and we know the value of $a_3$ modulo~$p$; this includes case~(2), as we explain below (and also the case $p\le 149$).

With the ability to compute the orders of arbitrary group elements, we obtain a Monte Carlo algorithm to compute the group exponent $\lambda$ of $\Jac(C_p)(\FF_p)$ in $p^{1/4+o(1)}$ time via \cite[Alg. 8.1]{Sut:Thesis} (and similarly for $\Jac(\tilde C_p)(\FF_p)$).
The positive integer $n$ output by this algorithm is guaranteed to divide~$\lambda$, and the probability that $n\ne \lambda$ can be made arbitrarily small, at an exponential rate.
Note that this algorithm needs access to random elements of $\Jac(C_p)(\FF_p)$; such elements may be found by picking random polynomials $u\in \FF_p[x]$ with $\deg u\le g=3$ and attempting to construct the Mumford representation $[u(x),v(x)]$ of the affine part of a representative for a divisor class in $\Jac(C_p)(\FF_p)$.
This can be viewed as a generalization of the decompression technique described in \cite[\S14.2]{HECECC}.

As shown in \cite[Prop. 4]{KS:HyperellipticLSeries}, given the group exponent $\lambda$, we can compute $\#\Jac(C_p)(\FF_p)$ using the generic group algorithm in \cite[Alg. 9.1]{Sut:Thesis} in $p^{1/4+o(1)}$ time.
The same applies to $\#\Jac(\tilde{C}_p)(\FF_p)$; we can thus determine the values of both $L_p(1)$ and $L_p(-1)$, which suffice to determine $L_p(T)$.
Indeed, adding the equations in \eqref{eq:jacorders} yields the value of $a_2$, and subtracting them and substituting $a_1$ yields $a_3$ (see \cite[Lemma 4]{Sut:GenericJacobians} for a more general result that applies whenever $p\ge 1600$).

The fact that we used a Monte Carlo algorithm to compute $\lambda$ means that there is some (exponentially small) probability of error.
We can eliminate this possibility by considering the set $S$ of candidate values for $\#\Jac(C_p)$ that are both multiples of our divisor~$n$ of $\lambda$ and compatible with the constraints imposed by \eqref{eq:jacorders}, the set of candidate pairs $(a_1,a_2)$, and the value of $a_3$ modulo $p$.
Typically $|S| = 1$ and we immediately obtain a verified result.
If not, any two candidates $N_1$ and $N_2$ for $\#\Jac(C_p)$ must differ in their $\ell$-adic valuations for at least two primes $\ell$ (for $p>30$ we cannot have $N_1$ divisible by $N_2$ or vice versa).
By computing the group structure of the $\ell$-Sylow subgroup $H$ of the smaller of these two primes $\ell$ via \cite[Alg. 9.1]{Sut:Thesis} (a Monte Carlo algorithm that always outputs a subgroup of $H$), we may be able to provably rule out one of the candidates by obtaining a lower bound on the $\ell$-adic valuation of $\#\Jac(C_p)(\FF_p)$ that exceeds the $\ell$-adic valuation of one of them.
Provided $\ell=O(p^{1/2})$, this takes $p^{1/4+o(1)}$ time; we can also use $\#\Jac(\tilde C_p)$.
In the computations described in Section \ref{sec:performance} this method was used to verify $L_p(T)$ in every case; we expect that one can prove that the complexity of this computation is bounded by $p^{1/4+o(1)}$ (at least on average), but we do not attempt this here.
The timings listed in Table~\ref{table:comparison} include the (negligible) cost of this verification in the ``lift'' columns.

In case (2), where we are given $L_p(T)L_p(-T)\pmod p $, there are at most 8 possible values of $L_p(T)\pmod p$.
To see this, let $\sum_{i=0}^6 b_i T^{2i}=L_p(T)L_p(-T)$. One obtains the relations
\[
b_1 \equiv 2a_2-a_1^2 \pmod p , \qquad b_2 \equiv a_2^2-2a_1a_3\pmod p, \qquad b_3 \equiv -a_3^2\pmod p.
\]
Given $b_1,b_2,b_3\pmod p$, there are two possibilities for $a_3\pmod p$, each of which determines a pair of quadratic equations in $a_1$ and $a_2$, which in turn has at most four solutions modulo~$p$.
Even though the value of $a_1$ is not uniquely determined in this case (no matter how big $p$ is), we can apply the procedure described above to compute the orders of arbitrary elements of $\Jac(C_p)(\FF_p)$ or $\Jac(\tilde C_p)(\FF_p)$ in $p^{1/4+o(1)}$ time, and the rest of the discussion follows; the key point is that we have $O(1)$ arithmetic progressions of length $O(p^{1/2})$ in which $\#\Jac(C_p)(\FF_p)$ and $\#\Jac(\tilde C_p)(\FF_p)$ are known to lie.

\section{Summary of the algorithm}
\label{sec:summary}

We now describe the complete algorithm.
The input consists of the polynomials $f$ and $g$ defining the curve $C$ according to \eqref{eq:eqn}, a bound $N$, and a parameter~$\kappa$ (see Remark \ref{rem:forest}).
Our goal is to compute $L_p(T) \in \ZZ[T]$ for all odd primes $p < N$, except for a small number of exceptional primes as documented below.

\bigskip

\begin{enumerate}[1.]
\item Find a quadratic field $K = \QQ(\sqrt{D})$ and a suitable model $y^2 = h(x)$ for $C'$ over $K$, using (for example) the method of Section \ref{sec:model}. \\
Choose small integers $\beta_1, \beta_2, \beta_3$ so that $h(x+\beta_i) \neq 0$ for each $i$.
\item Make a list of all odd primes $p < N$. For each $p$:
\begin{itemize}
\item If $p$ satisfies any of the following conditions, declare $p$ exceptional:
\begin{itemize}
\item $p$ divides $D$  (ramified prime).
\item $p$ divides some $\beta_i - \beta_j$.
\item $p$ is not relatively prime to the discriminant of $h(x)$ (and hence of $h(x+\beta_i)$ for all $i$).
\item $p$ is not relatively prime to the constant term of some $h(x+\beta_i)$.
\end{itemize}
\item Otherwise:
\begin{itemize}
\item If $(D/p) = 1$ (split prime), pick a solution of $\gamma^2 = D \pmod p$ and let $\pp = (p, \gamma - \sqrt D)$ be the corresponding prime ideal above $p$.
\item If $(D/p) = -1$ (inert prime), let $\pp = (p)$.
\end{itemize}
\end{itemize}
\item Let $U_p(\beta_i)$ be the vector $U_p$ (defined in Section \ref{sec:recurrences}) corresponding to the translated curve $y^2 = h(x+\beta_i)$. Call \textsc{QuadraticRemainderTree} (or the ``forest'' variant with parameter~$\kappa$) three times, once for each translated curve, with parameters as specified in Section \ref{sec:tree}, to compute $U_p(\beta_i)$ for all non-exceptional $p < N$.
\item For each non-exceptional prime $p < N$:
\begin{itemize}
\item Reduce $U_p(\beta_i)$ modulo $\pp$ to obtain $W^1_\pp(\beta_i)$ for $i = 1, 2, 3$.
\item Solve the system described at the end of Section \ref{sec:hw}, using \eqref{eq:hw-translate} to deduce $W_\pp$.
\item Compute $\det(I - T W_\pp)$ (split case) or $\det(I - T W_\pp W^{(p)}_\pp)$ (inert case), to determine $L_p(T) \pmod p$ (split case) or $L_p(T) L_p(-T) \pmod p$ (inert case), according to Section \ref{sec:lpolys}.
\item Apply the lifting procedure of Section \ref{sec:lifting} to finally obtain $L_p(T) \in \ZZ[T]$.
\end{itemize}
\end{enumerate}

\medskip

As pointed out earlier, in practice the running time is dominated by the calls to \textsc{QuadraticRemainderTree}.
The exceptional primes (of good reduction) can be handled by any other suitable method; for example, naive point counting for the small exceptional primes, and for the larger ones, parametrizing the conic over $\FF_p$ and then applying \cite{Har-kedlaya}.
One can easily prove that the number of exceptional primes is small, and one can also prove that these primes make negligible overall contribution to the complexity.
We omit the details.

\section{Implementation and performance}
\label{sec:performance}

We implemented most of the steps of the main algorithm in the C programming language, building on the implementation for the ordinary hyperelliptic case described in \cite{HS-hassewitt2}.
It uses the GMP library \cite{gmp-6.0} for basic integer arithmetic, and a customized FFT library for matrix arithmetic over $O_K$ when the entries have large coefficients; see Section \ref{sec:practical} and \cite[Section 5.1]{HS:HyperellipticHasseWitt}.

The program takes as input the original model for the curve $C$ over $\QQ$, and also the data describing the model $C'$ over $K$, namely the integer $D$ and the polynomial $h \in O_K[x]$.
The construction of $C'$ itself is not yet fully automated; for this we use \emph{ad hoc} methods, including Magma \cite{magma} and Sage \cite{sage-6.8} scripts.
The output is the sequence of polynomials $L_p(T)$ for all $p < N$, except for a small number of exceptional primes (listed below).
As pointed out earlier, it is not difficult to handle the missing primes of good reduction, but our implementation does not yet do this.
For the application to the Sato--Tate problem, it is safe to ignore a small number of primes, because they have a negligible effect on the statistical data being collected, but for the application to computing $L$-series one would need to address the missing primes (including those of bad reduction, a problem we do not address here).

We give one numerical example to illustrate the performance of our implementation, and compare it to the implementation for the ordinary hyperelliptic case from \cite{HS-hassewitt2}.
For the hyperelliptic case we take the curve $C_1$ defined by
\[
y^2 = 2x^8 - 2x^7 + 3x^6 - 2x^5 - 4x^4 + 2x^3 + 2x + 2.
\]
For the new algorithm, we take $C_2$ to be the curve given by $w^2 = f(X,Y,Z)$ where
\begin{multline*}
 f(X,Y,Z) = X^4 - 2 X^2 Y^2 - 2 Y^4 - X^3 Z - 2 X^2 Y Z - X Y^2 Z \\
   - Y^3 Z - X^2 Z^2 - X Y Z^2 - Y^2 Z^2 + X Z^3 + Z^4,
\end{multline*}
over the pointless conic $X^2 + Y^2 + Z^2 = 0$.
We base extend to $K = \QQ(i)$ with $i^2 = -1$ (so $D = -1$), and we parametrize the conic by
 $(\psi_1(x), \psi_2(x), \psi_3(x)) = (x^2 - 1, 2u, i(x^2 + 1))$.
This leads to the curve $C'_2$ over $K$ given by the hyperelliptic equation $y^2 = h(x)$ where
\begin{multline*}
 h(x) = (3 - 2i)x^8 + (2 - 4i)x^7 + (-4-4i)x^6 + (2-4i)x^5 + \\
   2x^4 + (-2-4i)x^3 + (-4+4i)x^2 + (-2-4i)x + (3 + 2i).
\end{multline*}
Note that the polynomial $f(X,Y,Z)$ was chosen carefully (by a random search) to ensure that the coefficients of $h(x)$ would not be too large.

We ran both programs to determine the zeta functions for $C_1$ and $C_2$ at all $p < N$ for various values of $N$.
In both cases we used the translates $\beta_i = i$ for $i = 0, 1, 2$.
For $C_1$ the exceptional primes were
3, 5, 7, 19, 181, 931781;
for $C_2$ they were 3, 5, 7, 13, 31, 269, 10169, 22229.
The computations were run on a single core of an otherwise idle 64-core 2.5 GHz Intel Xeon (E7-8867W v3) server with 1088 GB RAM, running Ubuntu Linux version 14.04.
We used the GCC compiler, version 4.8.4 \cite{gcc-4.8.4}, with optimization flags \texttt{-O3 -funroll-loops}.

Performance figures are given in Table \ref{table:comparison}.
We set the parameter $\kappa$ (see Remark~\ref{rem:forest}) to $7$ in all our tests, a choice that optimized (or very nearly optimized) the running time in every case.
The ``time'' columns show the total running time, excluding the lifting phase, and the ``space'' columns show the peak memory usage.
The running time of the lifting phase is given in the ``lift'' column; the memory usage is negligible for this phase.

The last two columns give estimates for the time to run \texttt{hypellfrob}, an implementation of the algorithm described in \cite{Har-kedlaya}.
For a hyperelliptic curve of genus $3$, and for a given $p$-adic precision parameter $\alpha \geq 1$, it computes $L_p(T) \pmod{p^\alpha}$ in time $\alpha^{O(1)} p^{1/2+o(1)}$ for each $p$ separately; prior to \cite{HS:HyperellipticHasseWitt,HS-hassewitt2}, it was the fastest available software for this problem.
The ``mod $p^\alpha$'' column, for $\alpha = 1, 2$, gives an estimate for the total time to compute $L_p(T) \pmod{p^\alpha}$ for all $p < N$.
The estimates were obtained by sampling for several $p < N$ and extrapolating based on the number of primes in each interval.
For $\alpha = 1$, this is enough to determine $a_1$ (provided $p \geq 149$) but not all of $L_p(T)$; one would still need to run a lifting step to obtain $L_p(T)$.
For $\alpha = 2$, it determines $L_p(T)$ completely.

Note that \texttt{hypellfrob} is limited to curves with a rational Weierstrass point, so we used the curve $y^2 = x^7 + 3x^6 + 5x^5 + 7x^4 + 11x^3 + 13x^2 + 17x + 19$.
For this reason the timings are not directly comparable to the columns for $C_1$ and $C_2$, but they still provide a reasonable indication of what should be expected.
No implementation for the general case $y^2 = h(x)$ with $\deg h = 8$ is currently available; one could presumably be developed by adapting \cite{Har-extension}.

It is clear from Table \ref{table:comparison} that, broadly speaking, the new algorithm performs similarly to its hyperelliptic antecedent \cite{HS-hassewitt2}.
In particular, the running time is close to linear in $N$.
For the largest $N$ in the table, we observe a slowdown from $C_1$ to $C_2$ of a factor of about $3$.
This is only slightly worse than the factor $2.33$ that one expects asymptotically (see Section \ref{sec:practical}).
For $N = 2^{30}$ we see that the new algorithm is nearly $50$ times faster than \texttt{hypellfrob}.
As promised, the lifting phase makes a negligible overall contribution to the running time.

The memory footprint for $C_2$ is about twice that for $C_1$.
This is exactly as expected, since the input coefficient sizes are roughly equal, and for $C_2$ we carry around twice as much information in each matrix (the coefficients of $1$ and $\alpha$).

An obvious disadvantage of the new algorithm is that it is more difficult to parallelize than \texttt{hypellfrob}.
The latter is trivially parallelizable, by distributing primes among threads.
In fact, there is some scope for parallelization in the new algorithm, but this is a rather involved question that will be deferred to a subsequent paper.

\begin{center}
\begin{table}[!htb]
\setlength{\tabcolsep}{6pt}
\begin{tabular}{@{}rrrrrrrrrrrrr@{}}
&&\multicolumn{3}{c}{$C_1$}&&\multicolumn{3}{c}{$C_2$}&&\multicolumn{2}{c}{\texttt{hypellfrob}}\\
\cmidrule(r){3-5}\cmidrule(r){7-9}\cmidrule(r){11-12}
$N$      &&     time &            space &     lift &&      time &            space &     lift &&         mod $p$ & mod $p^2$ \\
\midrule
$2^{16}$ &&        4 &   0.05           &        2 &&            14 &   0.06           &        3 &&             36  &                 127 \\
$2^{17}$ &&        9 &   0.06           &        4 &&            33 &   0.08           &        6 &&             92  &                 326 \\
$2^{18}$ &&       22 &   0.08           &        8 &&            75 &   0.11           &       13 &&            234  &                 849 \\
$2^{19}$ &&       53 &   0.10           &       16 &&           178 &   0.17           &       25 &&            600  &             2{,}680 \\
$2^{20}$ &&      129 &   0.17           &       32 &&           418 &   0.30           &       48 &&        1{,}770  &             7{,}500 \\
$2^{21}$ &&      310 &   0.30           &       66 &&           992 &   0.57           &       99 &&        4{,}830  &            25{,}300 \\
$2^{22}$ &&      753 &   0.58           &      136 &&       2{,}390 &   1.18           &      201 &&       14{,}900  &           189{,}000 \\
$2^{23}$ &&  1{,}780 &   1.13           &      278 &&       5{,}520 &   2.47           &      413 &&       42{,}700  &           653{,}000 \\
$2^{24}$ &&  4{,}090 &   2.41           &      574 &&      12{,}600 &   5.33           &      850 &&      125{,}000  &       1{,}680{,}000 \\
$2^{25}$ &&  9{,}410 &   4.98           &  1{,}190 &&      29{,}000 &  11.8\phantom{0} &  1{,}760 &&      395{,}000  &       5{,}030{,}000 \\ 
$2^{26}$ && 22{,}100 &  10.5\phantom{0} &  2{,}470 &&      66{,}300 &  24.5\phantom{0} &  3{,}650 &&  1{,}230{,}000  &      16{,}000{,}000 \\
$2^{27}$ && 50{,}900 &  23.5\phantom{0} &  5{,}160 &&     151{,}000 &  52.0\phantom{0} &  7{,}610 &&  3{,}730{,}000  &      44{,}100{,}000 \\
$2^{28}$ &&118{,}000 &  54.0\phantom{0} & 10{,}800 &&     344{,}000 & 112\phantom{.00} & 15{,}900 &&  10{,}000{,}000  &     113{,}000{,}000 \\
$2^{29}$ &&276{,}000 & 124\phantom{.00} & 22{,}800 &&     783{,}000 & 241\phantom{.00} & 33{,}600 &&  35{,}600{,}000  &     368{,}000{,}000 \\
$2^{30}$ &&681{,}000 & 288\phantom{.00} & 48{,}200 && 1{,}980{,}000 & 480\phantom{.00} & 71{,}100 &&  97{,}100{,}000  &     948{,}000{,}000 \\\bottomrule
\end{tabular}
\bigskip

\caption{Comparison of algorithms for computing $L_p(T)$ for $p < N$. See text for column explanations. Time in CPU seconds, space in gigabytes, all values rounded to three significant figures.}
\label{table:comparison}
\end{table}
\end{center}

\begin{acknowledgements}\label{ackref}
The authors thank Jesse Kass, Kiran Kedlaya, Christophe Ritzenthaler and John Voight for helpful conversations, and the referees for their comments that led to improvements in the presentation of these results.
\end{acknowledgements}

\bibliographystyle{amsplain}
\bibliography{pointless}

\affiliationone{
   David Harvey\\
	 School of Mathematics and Statistics\\
     University of New South Wales\\
	 Sydney NSW  2052\\
	 Australia
   \email{d.harvey@unsw.edu.au}}
\affiliationtwo{
   Maike Massierer\\
   School of Mathematics and Statistics\\
     University of New South Wales\\
	 Sydney NSW  2052\\
	 Australia
   \email{maike@unsw.edu.au}}
\affiliationthree{
    Andrew V. Sutherland\\
    Department of Mathematics\\
    Massachusetts Institute of Technology\\
    Cambridge, MA \ 02139, USA
    \email{drew@math.mit.edu}}   
    
\end{document}